\newtheorem{theorem}{Theorem}
\newtheorem{axiom}{Axiom}
\newtheorem{corollary}[theorem]{Corollary}
\newtheorem{definition}[axiom]{Definition}
\newtheorem{lemma}[theorem]{Lemma}
\newenvironment{remark}{\rem\rm}{\endrem}
\newcounter{unnumber}
\newenvironment{proof}{\prf\rm}{\hfill{$\blacksquare$}\endprf}
\newcommand{\R}{\mathbb{R}}%
\newcommand{\N}{\mathbb{N}}%
\newcommand{\ol}{\overline}%
\newcommand{\ul}{\underline}%
\renewcommand{\>}{\right\rangle}
\newcommand{\<}{\left\langle}
\DeclareMathOperator*\gr{Gr}%
\DeclareMathOperator*\id{Id}%
\DeclareMathOperator*\prox{prox}%
\DeclareMathOperator*\argmin{argmin}
\DeclareMathOperator*\zer{zer}
\DeclareMathOperator*\loc{loc}
\DeclareMathOperator*\fix{Fix}
\title{A dynamical system associated with the fixed points set of a nonexpansive operator}
\author{Radu Ioan Bo\c{t} \thanks{University of Vienna, Faculty of Mathematics, Oskar-Morgenstern-Platz 1, A-1090 Vienna, Austria,
email: radu.bot@univie.ac.at. Research partially supported by DFG (German Research Foundation), project BO 2516/4-1.} \and
Ern\"{o} Robert Csetnek \thanks {University of Vienna, Faculty of Mathematics, Oskar-Morgenstern-Platz 1, A-1090 Vienna, Austria,
email: ernoe.robert.csetnek@univie.ac.at. Research supported by DFG (German Research Foundation), project BO 2516/4-1.}}
\begin{document}
\maketitle

\noindent \textbf{Abstract.} We study the existence and uniqueness of (locally) absolutely continuous trajectories of 
a dynamical system governed by a nonexpansive operator. The weak convergence of the orbits to a fixed point of the 
operator is investigated by relying on Lyapunov analysis. We show also an order of convergence of $o\left(\frac{1}{\sqrt{t}}\right)$ for the fixed point residual 
of the trajectory of the dynamical system.  We apply the results to dynamical systems associated with the problem of finding the zeros of the sum 
of a maximally monotone operator and a cocoercive one. Several dynamical systems from the literature turn out to be particular instances of this general approach. 
\vspace{1ex}

\noindent \textbf{Key Words.} dynamical systems, Lyapunov analysis, Krasnosel'ski\u{\i}--Mann algorithm, monotone inclusions, 
forward-backward algorithm \vspace{1ex}

\noindent \textbf{AMS subject classification.} 34G25, 47J25, 47H05, 90C25

\section{Introduction and preliminaries}\label{sec-intr}

Having their origins in the nowadays standard works of Br\'{e}zis, Baillon and Bruck (see \cite{brezis, baillon-brezis1976, bruck}), 
differential inclusions and continuous dynamical systems governed by maximal monotone operators still play an important role 
in optimization and differential equations. While usually the existence and uniqueness of such trajectories is guaranteed 
in the framework of the Cauchy-Lipschitz theorem, their (ergodic) convergence to the set of zeros of the involved maximally monotone 
operators (which in case of the convex subdifferential of a convex function coincides with the set of its minima) relies on Lyapunov analysis. 

In this paper we turn our attention to dynamical systems formulated via resolvents of maximal monotone operators, being motivated by several papers on this subject, like 
\cite{bolte-2003, abbas-att-arx14, att-sv2011, abbas-att-sv, antipin}. In \cite{bolte-2003}, Bolte 
studied the convergence of the trajectories of the following dynamical system
\begin{equation}\label{syst-bolte}\left\{
\begin{array}{ll}
\dot x(t)+x(t)=P_C\big(x(t)-\mu\nabla\phi(x(t))\big)\\
x(0)=x_0.
\end{array}\right.\end{equation}
where $\phi:{\cal H}\rightarrow\R$ is a convex $C^1$ function defined on a real Hilbert space ${\cal H}$, $C$ is a nonempty, closed and convex subset of 
${\cal H}$, $x_0\in {\cal H}$, $\mu>0$ and $P_C$ denotes the projection operator on the set $C$. In this context it is shown that the trajectory of \eqref{syst-bolte} converges weakly 
to a minimizer of the optimization problem  
\begin{equation}\label{opt-bolte} 
\inf_{x\in C}\phi(x),                                                             
\end{equation}
provided the latter is solvable.  We refer also to \cite{antipin} for further statements and results concerning \eqref{syst-bolte}. 

The following generalization of the dynamical system \eqref{syst-bolte} has been recently considered by Abbas and Attouch in \cite[Section 4.2]{abbas-att-arx14}: 
\begin{equation}\label{syst-abb-att}\left\{
\begin{array}{ll}
\dot x(t)+x(t)=\prox_{\mu\Phi}\big(x(t)-\mu B(x(t))\big)\\
x(0)=x_0,
\end{array}\right.\end{equation}
where $\Phi:{\cal H}\rightarrow\R\cup\{+\infty\}$ is a proper, convex and lower semicontinuous function defined on a real Hilbert space ${\cal H}$, 
$B:{\cal H}\rightarrow {\cal H}$ is a cocoercive operator, $x_0\in {\cal H}$, $\mu >0$ and $\prox_{\mu\Phi}:{\cal H}\rightarrow {\cal H}$,
\begin{equation}\label{prox-def}\prox\nolimits_{\mu \Phi}(x)=\argmin_{y\in {\cal H}}\left \{\Phi(y)+\frac{1}{2\mu}\|y-x\|^2\right\},
\end{equation}
denotes the proximal point operator of $\Phi$.

According to \cite{abbas-att-arx14}, in case $\zer(\partial \Phi+B)\neq\emptyset$, the weak convergence of the orbit $x$ of \eqref{syst-abb-att} is 
ensured by choosing the step-size $\mu$ in a suitable domain bounded by the parameter of cocoercivity of the operator $B$ (notice that 
$\partial\Phi$ denotes the convex subdifferential of $\Phi$). 

Let us mention that the time discretization of the dynamical system \eqref{syst-abb-att} leads to the classical 
forward-backward algorithm, a scheme which iteratively generates a sequence that weakly converges to a zero of $\partial \Phi+B$, see 
\cite{abbas-att-arx14} and \cite{bauschke-book}. For more on the relations between the continuous and discrete dynamics we refer the 
reader to \cite{peyp-sorin2010}. We also refer to \cite{b-c-h1, b-c-h2, vu} for more insights into the outstanding role played  by the 
discrete forward-backward algorithm in connection to the solving of complexly structured monotone inclusion problems.

The dynamical systems \eqref{syst-bolte} and \eqref{syst-abb-att} are the starting points of our research. It is known, see \cite{bauschke-book}, that the discrete version of the forward-backward 
algorithm and some of its convergence properties follow form a more general iterative scheme, namely the Krasnosel'ski\u{\i}--Mann algorithm, 
which generates a sequence which approaches the set of fixed points of a nonexpansive operator. Let us mention here that the classical Douglas-Rachford algorithm, 
designed for determining the set of zeros of the sum of two set-valued maximally monotone operators (see \cite{bauschke-book}) can be embedded in the framework of 
the Krasnosel'ski\u{\i}--Mann-type algorithm.

In this paper we study a time-continuous dynamical system which involves a nonexpansive operator, see \eqref{dyn-syst-KM}. Firstly, we address 
the existence and uniqueness of (locally) absolutely continuous trajectories of the considered system, which follows by reformulating in 
the framework of Cauchy-Lipschitz problems and by applying a classical result, see \cite{haraux, sontag}.  In the next section we study the convergence of the trajectories to a fixed point of the operator, 
the investigation relying on Lyapunov analysis combined with the continuous version of the celebrated Opial Lemma. We study also the convergence rates of the fixed point residual of the orbits of the dynamical system, 
for which we obtain a speed of convergence of order $o(1/\sqrt{t})$. Further, we propose a generalization of the forward-backward 
continuous version of the dynamical system \eqref{syst-abb-att} by considering instead of the convex subdifferential a maximally monotone operator and a relaxed backward step. A discussion on possible time-discretizations of 
the investigated dynamical systems is also made. In the last section we present a second approach which reduces the study of the dynamical system \eqref{dyn-syst-KM} via time rescaling arguments to the one of 
autonomous systems governed by cocoercive operators and which allows the formulation of convergence statements under weaker assumptions than in the direct approach.

Let us fix a few notations used throughout the paper. Let $\N= \{0,1,2,...\}$ be the set of nonnegative integers. Let ${\cal H}$ be a real Hilbert space with inner product
$\langle\cdot,\cdot\rangle$ and associated norm $\|\cdot\|=\sqrt{\langle \cdot,\cdot\rangle}$.

\section{A dynamical system: existence and uniqueness of global solutions}\label{sec2}

Let $T:{\cal H}\rightarrow {\cal H}$ be a nonexpansive mapping (that is $\|Tx-Ty\|\leq\|x-y\|$ for all $x,y\in{\cal H}$), 
$\lambda:[0,+\infty)\rightarrow [0,1]$ be a Lebesgue measurable function and $x_0\in {\cal H}$. In this paper we are concerned with the following dynamical system: 

\begin{equation}\label{dyn-syst-KM}\left\{
\begin{array}{ll}
\dot x(t)=\lambda(t)\big(T(x(t))-x(t)\big)\\
x(0)=x_0.
\end{array}\right.\end{equation}

The first issue we investigate is the existence of strong solutions for \eqref{dyn-syst-KM}. As in \cite{att-sv2011, abbas-att-sv}, we consider the following definition of an absolutely continuous function.

\begin{definition}\label{abs-cont} \rm (see, for instance, \cite{att-sv2011, abbas-att-sv}) A function $f:[0,b]\rightarrow {\cal H}$ (where $b>0$) is said to be absolutely continuous if one of the 
following equivalent properties holds: 

(i)  there exists an integrable function $g:[0,b]\rightarrow {\cal H}$ such that $$f(t)=f(0)+\int_0^t g(s)ds \ \ \forall t\in[0,b];$$

(ii) $f$ is continuous and its distributional derivative is Lebesgue integrable on $[0,b]$; 

(iii) for every $\varepsilon > 0$, there exists $\eta >0$ such that for any finite family of intervals $I_k=(a_k,b_k)$ we have the implication:
$$\left(I_k\cap I_j=\emptyset \mbox{ and }\sum_k|b_k-a_k| < \eta\right)\Longrightarrow \sum_k\|f(b_k)-f(a_k)\| < \varepsilon.$$
\end{definition}

\begin{remark}\label{rem-abs-cont}\rm (a) It follows from the definition that an absolutely continuous function is differentiable almost 
everywhere, its derivative coincides with its distributional derivative almost everywhere and one can recover the function from its derivative $f'=g$ 
by the integration formula (i). 

(b) If $f:[0,b]\rightarrow {\cal H}$ (where $b>0$) is absolutely continuous and $B:{\cal H}\rightarrow {\cal H}$ is $L$-Lipschitz continuous
(where $L\geq 0$), then the function $h=B\circ f$ is absolutely continuous. This can be easily verified by considering the characterization in
Definition \ref{abs-cont}(iii). Moreover, $h$ is almost everywhere differentiable and the inequality $\|h'(\cdot)\|\leq L\|f'(\cdot)\|$ holds almost everywhere.   
\end{remark}

\begin{definition}\label{str-sol}\rm We say that $x:[0,+\infty)\rightarrow {\cal H}$ is a strong global solution of \eqref{dyn-syst-KM} if the 
following properties are satisfied: 

(i) $x:[0,+\infty)\rightarrow {\cal H}$ is absolutely continuous on each interval $[0,b]$, $0<b<+\infty$; 

(ii) $\dot x(t)=\lambda(t)\big(T(x(t))-x(t)\big)$ for almost all $t\in[0,+\infty)$;

(iii) $x(0)=x_0$.
\end{definition}

In what follows we verify the existence and uniqueness of strong global solutions of \eqref{dyn-syst-KM}. To this end we use the Cauchy-Lipschitz theorem for absolutely continues trajectories (see for example 
\cite[Proposition 6.2.1]{haraux}, \cite[Theorem 54]{sontag}). 

It is immediate that the system \eqref{dyn-syst-KM} can be written as \begin{equation}\label{existence}\left\{
\begin{array}{ll}
\dot x(t)=f(t,x(t))\\
x(0)=x_0,
\end{array}\right.\end{equation}

where $f:[0,+\infty)\times {\cal H}\rightarrow {\cal H}$ is defined by $f(t,x)=\lambda(t)(Tx-x)$. 

(a) Take arbitrary $x,y\in {\cal H}$. Relying on the nonexpansiveness of $T$, for all $t\geq 0$ we have 
$$\|f(t,x)-f(t,y)\|\leq2\lambda(t)\|x-y\|.$$

Since $\lambda$ is bounded above, one has $2\lambda(\cdot)\in L^1([0,b])$ for any $0<b<+\infty$; 

(b) Take arbitrary $x\in {\cal H}$ and $b>0$. One has 

$$\int_0^b\|f(t,x)\|dt= \|Tx-x\|\int_0^b\lambda(t)dt\leq b\|Tx-x\|,$$
hence $$\forall x\in{\cal H}, \ \forall b>0, \ \ f(\cdot,x)\in L^1([0,b],{\cal H}).$$

By considering the statements proven in (a) and (b), the existence and uniqueness of a strong global solution of
the dynamic system \eqref{dyn-syst-KM} follows. 

\begin{remark}\label{e-u-lambda} From the considerations above one can easily notice that the existence and uniqueness of strong global solutions of \eqref{dyn-syst-KM}  can be guaranteed in the more 
general setting when $T$ is Lipschitz continuous and $\lambda:[0,+\infty)\rightarrow \R$ is a Lebesgue measurable function such that 
$\lambda(\cdot)\in L^1_{\loc}([0,+\infty))$. 
\end{remark}

\section{Convergence of the trajectories}

In this section we investigate the convergence properties of the trajectories of the dynamical system \eqref{dyn-syst-KM}. We show that 
under mild conditions imposed on the function $\lambda$, the orbits converge weakly to a fixed point of the nonexpansive operator, provided 
the set of such points is nonempty. 

In order to achieve this, we need the following preparatory result.

\begin{lemma}\label{fejer-cont2} (\!\!\cite[Lemma 5.2]{abbas-att-sv}) If $1 \leq p < \infty$, $1 \leq r \leq \infty$, $F:[0,+\infty)\rightarrow[0,+\infty)$ is 
locally absolutely continuous, $F\in L^p([0,+\infty))$, $G:[0,+\infty)\rightarrow\R$, $G\in  L^r([0,+\infty))$ and 
for almost all $t$ $$\frac{d}{dt}F(t)\leq G(t),$$ then $\lim_{t\rightarrow +\infty} F(t)=0$. 
\end{lemma}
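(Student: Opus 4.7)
The plan is to argue by contradiction. Suppose $F(t)\not\to 0$; then since $F\geq 0$, there exist $\varepsilon>0$ and a sequence $t_n\nearrow+\infty$ with $F(t_n)\geq\varepsilon$ for all $n$. The idea is to use the differential inequality $\dot F\leq G$ to transfer this lower bound at $t_n$ to a lower bound on a whole interval of uniform length just before $t_n$, and then contradict $F\in L^p$.

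Integrating $\dot F\leq G$ from $t\leq t_n$ to $t_n$ (using local absolute continuity of $F$) yields
\[
F(t_n)-F(t)\leq \int_t^{t_n} G(\tau)\,d\tau,
\]
so that for every $\delta>0$ and every $t\in[t_n-\delta,t_n]$,
\[
F(t)\;\geq\; \varepsilon-\int_{t_n-\delta}^{t_n}|G(\tau)|\,d\tau.
\]
My next step is to show that, by choosing $\delta>0$ small and $n$ large, the right-hand side is at least $\varepsilon/2$. If $r=\infty$ I pick $\delta\leq \varepsilon/(2\|G\|_\infty)$ (the statement being trivial if $\|G\|_\infty=0$). If $1\leq r<\infty$, I apply H\"older's inequality with conjugate exponent $r'$ (with the convention $r'=\infty$ if $r=1$) to get
\[
\int_{t_n-\delta}^{t_n}|G(\tau)|\,d\tau \;\leq\; \Bigl(\int_{t_n-\delta}^{t_n}|G(\tau)|^r\,d\tau\Bigr)^{1/r}\delta^{1/r'},
\]
and since $G\in L^r$, the tail $\bigl(\int_{t_n-\delta}^{t_n}|G|^r\bigr)^{1/r}$ tends to $0$ as $n\to\infty$ (for $r=1$ this is automatic, for $1<r<\infty$ it follows from absolute continuity of the integral). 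So for any fixed $\delta>0$ and all sufficiently large $n$, this bound is $\leq\varepsilon/2$.

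Having established $F(t)\geq\varepsilon/2$ on $[t_n-\delta,t_n]$ for all sufficiently large $n$, I pass to a subsequence of $(t_n)$ with $t_{n+1}-t_n>\delta$, making the intervals pairwise disjoint. Then
\[
\int_0^{+\infty} F(t)^p\,dt \;\geq\; \sum_n\int_{t_n-\delta}^{t_n}F(t)^p\,dt \;\geq\; \sum_n (\varepsilon/2)^p\,\delta \;=\;+\infty,
\]
contradicting $F\in L^p([0,+\infty))$. Hence $F(t)\to 0$.

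The main obstacle is cosmetic rather than conceptual: handling the three sub-cases $r=1$, $1<r<\infty$, $r=\infty$ uniformly so that the H\"older argument produces a positive $\delta$ that can be chosen independently of $n$ once $n$ is large. Once this is done the contradiction with $L^p$-integrability is immediate, and local absolute continuity of $F$ enters only via the fundamental theorem of calculus in the very first step.
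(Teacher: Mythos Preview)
The paper does not supply its own proof of this lemma: it is quoted verbatim from \cite[Lemma 5.2]{abbas-att-sv} and used as a black box, so there is nothing to compare your argument against. Your contradiction argument is correct and is in fact the standard way to prove such a statement: local absolute continuity lets you integrate the inequality $\dot F\leq G$ over $[t,t_n]$, the $L^r$ hypothesis on $G$ (split into the cases $r=\infty$ and $1\leq r<\infty$ exactly as you do) forces $\int_{t_n-\delta}^{t_n}|G|$ to be small for a fixed $\delta$ and large $n$, and the resulting uniform lower bound $F\geq \varepsilon/2$ on infinitely many disjoint intervals of length $\delta$ contradicts $F\in L^p$. The only cosmetic remark is that in the case $1\leq r<\infty$ you do not really need H\"older: the tail estimate $\int_{t_n-\delta}^{+\infty}|G|^r\to 0$ already gives $\int_{t_n-\delta}^{t_n}|G|\to 0$ when $r=1$, and for $r>1$ the same tail bound combined with H\"older works with any fixed $\delta$, so the case distinction collapses to ``$r=\infty$: choose $\delta$ small'' versus ``$r<\infty$: choose $n$ large''.
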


The next result which we recall here is the continuous version of the Opial Lemma (see for example \cite[Lemma 5.3]{abbas-att-sv}, \cite[Lemma 1.10]{abbas-att-arx14}). 

\begin{lemma}\label{opial} Let $S \subseteq {\cal H}$ be a nonempty set and $x:[0,+\infty)\rightarrow{\cal H}$ a given map. Assume that 

(i) for every $z\in S$, $\lim_{t\rightarrow+\infty}\|x(t)-z\|$ exists; 

(ii) every weak sequential cluster point of the map $x$ belongs to $S$. 

\noindent Then there exists $x_{\infty}\in S$ such that $w-\lim_{t\rightarrow+\infty}x(t)=x_{\infty}$. 
\end{lemma}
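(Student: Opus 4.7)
The plan is to use condition (i) to extract boundedness and hence the existence of weak sequential cluster points, then to use the Hilbert space structure via the polarization/norm identity to show that any two weak cluster points must coincide, and finally to upgrade uniqueness of the cluster point to genuine weak convergence. The classical Opial argument in the discrete setting transfers verbatim to nets/continuous time, since the only real use of sequentiality is in invoking reflexivity of $\mathcal{H}$.

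First I would fix an arbitrary $z_0 \in S$, which exists since $S \neq \emptyset$. Hypothesis (i) says that $\|x(t)-z_0\|$ is a convergent, hence bounded, function of $t$, so the trajectory $\{x(t) : t \geq 0\}$ is norm-bounded in $\mathcal{H}$. By reflexivity (or Eberlein--\v{S}mulian), every sequence $(x(t_n))_{n\in\N}$ with $t_n\to+\infty$ has a weakly convergent subsequence, so the set of weak sequential cluster points of $x$ as $t\to+\infty$ is nonempty.

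The heart of the proof is uniqueness of this cluster point. Suppose $x_1$ and $x_2$ are two weak sequential cluster points; by (ii) both belong to $S$, so by (i) the limits $\ell_i := \lim_{t\to+\infty}\|x(t)-x_i\|^2$ exist for $i=1,2$. Using the expansion
\begin{equation*}
\|x(t)-x_1\|^2 - \|x(t)-x_2\|^2 = 2\langle x(t), x_2 - x_1\rangle + \|x_1\|^2 - \|x_2\|^2,
\end{equation*}
the right-hand side shows that $\langle x(t), x_2-x_1\rangle$ converges as $t\to+\infty$ to some real number $\ell$. Evaluating this limit along a sequence $t_n\to+\infty$ with $x(t_n)\rightharpoonup x_1$ yields $\ell = \langle x_1, x_2-x_1\rangle$, while evaluating along a sequence $s_n\to+\infty$ with $x(s_n)\rightharpoonup x_2$ yields $\ell = \langle x_2, x_2-x_1\rangle$. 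Subtracting gives $\|x_2-x_1\|^2 = 0$, so $x_1=x_2$. Call this unique cluster point $x_\infty \in S$.

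Finally, to pass from uniqueness of the cluster point to weak convergence of the whole trajectory, I argue by contradiction: if $x(t)\not\rightharpoonup x_\infty$, there exist $y\in\mathcal{H}$, $\varepsilon>0$ and a sequence $t_n\to+\infty$ with $|\langle x(t_n)-x_\infty, y\rangle| \geq \varepsilon$ for all $n$. Boundedness of the trajectory lets us extract a subsequence along which $x(t_{n_k})$ converges weakly to some $x' \in \mathcal{H}$; by definition $x'$ is a weak sequential cluster point, so by the uniqueness proved above $x'=x_\infty$, contradicting $|\langle x'-x_\infty, y\rangle| \geq \varepsilon$. Hence $x(t)\rightharpoonup x_\infty$, as claimed. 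The only step requiring care is the uniqueness argument, and specifically making sure that the limit of $\langle x(t), x_2-x_1\rangle$ (taken over all $t$) agrees with both of the subsequential weak limits; this is immediate from the fact that the former limit exists.
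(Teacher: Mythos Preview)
Your proof is correct and follows the standard Opial argument: boundedness from (i), uniqueness of the weak cluster point via the polarization identity and (ii), and the subsequence-contradiction step to get full weak convergence. Note that the paper does not actually prove this lemma; it is quoted as a known result with references to \cite[Lemma 5.3]{abbas-att-sv} and \cite[Lemma 1.10]{abbas-att-arx14}, so there is no in-paper proof to compare against---your argument is exactly the classical one those references contain.
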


The following result, which is a consequence of the demiclosedness principle (see \cite[Theorem 4.17]{bauschke-book}), 
will be used in the proof of Theorem \ref{conv-KM}. which is the main theorem of this paper. 

\begin{lemma}\label{demi}(\!\!\cite[Corollary 4.18]{bauschke-book}) Let $T:{\cal H}\rightarrow {\cal H}$ be nonexpansive and
let $(x_n)_{n\in\N}$ be a sequence in ${\cal H}$ and $x\in {\cal H}$ such that $w-\lim_{n\rightarrow +\infty} x_n=x$ and 
$(Tx_n-x_n)_{n\in\N}$  converges strongly to $0$ (as $n\rightarrow+\infty$). Then $x\in\fix T$.
\end{lemma}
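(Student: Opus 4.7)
The plan is to derive the conclusion from Opial's property of Hilbert spaces combined with the nonexpansiveness of $T$. Recall Opial's property: if $x_n \rightharpoonup x$ weakly in $\mathcal{H}$, then for every $y \in \mathcal{H}$ with $y \neq x$,
\[
\liminf_{n\to+\infty}\|x_n - x\| < \liminf_{n\to+\infty}\|x_n - y\|.
\]
This is the ingredient I would invoke; it is a standard feature of Hilbert space geometry coming from the parallelogram identity.

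The argument then proceeds by contradiction. Suppose $Tx \neq x$. First I would apply Opial's inequality with the choice $y = Tx$, obtaining
\[
\liminf_{n\to+\infty}\|x_n - x\| < \liminf_{n\to+\infty}\|x_n - Tx\|.
\]
Next I would estimate the right-hand side using the triangle inequality together with the nonexpansiveness of $T$: for every $n \in \mathbb{N}$,
\[
\|x_n - Tx\| \leq \|x_n - Tx_n\| + \|Tx_n - Tx\| \leq \|x_n - Tx_n\| + \|x_n - x\|.
\]
Since $\|Tx_n - x_n\| \to 0$ by hypothesis, passing to the $\liminf$ on both sides yields
\[
\liminf_{n\to+\infty}\|x_n - Tx\| \leq \liminf_{n\to+\infty}\|x_n - x\|,
\]
which contradicts the strict Opial inequality. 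Hence $Tx = x$, i.e., $x \in \fix T$.

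There is no real obstacle here: the proof is essentially a three-line application of Opial's property. The only delicate point is that one must work with $\liminf$ (not $\lim$), because the weak convergence $x_n \rightharpoonup x$ does not by itself ensure convergence of the norms $\|x_n - x\|$; Opial's inequality is nonetheless sharp enough to force the desired conclusion. If one preferred not to invoke Opial, a direct proof could be given via the identity $\|x_n - Tx\|^2 = \|x_n - x\|^2 + 2\langle x_n - x, x - Tx\rangle + \|x - Tx\|^2$, where the middle term vanishes in the limit by weak convergence, leading to $\|x - Tx\|^2 \leq 0$ after combining with the nonexpansiveness estimate; but the Opial route is shorter and matches the tools already in use in the paper.
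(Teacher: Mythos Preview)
The paper does not actually prove this lemma; it is quoted verbatim as \cite[Corollary 4.18]{bauschke-book} and described as ``a consequence of the demiclosedness principle (see \cite[Theorem 4.17]{bauschke-book})'', with no argument supplied. So there is no in-paper proof to compare against.

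Your argument is correct and is precisely the classical route: Opial's property in Hilbert space, combined with the nonexpansiveness estimate $\|Tx_n-Tx\|\le\|x_n-x\|$ and the hypothesis $\|x_n-Tx_n\|\to 0$, forces $Tx=x$ by contradiction. The handling of the $\liminf$ is fine because $\|x_n-Tx_n\|$ genuinely converges (to $0$), so $\liminf(\|x_n-Tx_n\|+\|x_n-x\|)=\liminf\|x_n-x\|$. This is essentially the same proof one finds behind \cite[Theorem 4.17]{bauschke-book}, so your approach is consistent with the source the paper cites. The alternative you sketch via the expansion of $\|x_n-Tx\|^2$ is also valid and amounts to unpacking Opial's inequality by hand.
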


The following identity will be used  
several times in the paper (see for example \cite[Corollary 2.14]{bauschke-book}):
\begin{equation}\label{id-hilb} \|\alpha x+(1-\alpha)y\|^2+\alpha(1-\alpha)\|x-y\|^2=\alpha\|x\|^2+(1-\alpha)\|y\|^2 \ \forall \alpha\in\R
 \ \forall (x,y)\in{\cal H}\times{\cal H}.\end{equation}

\begin{theorem}\label{conv-KM} Let $T:{\cal H}\rightarrow {\cal H}$ be a nonexpansive mapping such that $\fix T\neq\emptyset$, 
$\lambda:[0,+\infty)\rightarrow [0,1]$ a Lebesgue measurable function and 
$x_0\in {\cal H}$. Suppose that one of the following conditions is fulfilled: 
$$\int_0^{+\infty}\lambda(t)(1-\lambda(t))dt=+\infty \ \mbox{or} \ \inf_{t\geq 0}\lambda(t)>0.$$ 
Let  $x:[0,+\infty)\rightarrow{\cal H}$ be the unique strong global solution of \eqref{dyn-syst-KM}. Then the following statements are true: 

(i) the trajectory $x$ is bounded and $\int_0^{+\infty}\|\dot x(t)\|^2dt<+\infty$;  

(ii) $\lim_{t\rightarrow+\infty}(T(x(t))-x(t))=0$; 

(iii) $\lim_{t\rightarrow+\infty}\dot x(t)=0$;

(iv) $x(t)$ converges weakly to a point in $\fix T$, as $t\rightarrow+\infty$.
\end{theorem}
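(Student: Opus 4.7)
The plan is to establish (i)--(iv) by verifying the two hypotheses of Opial's Lemma \ref{opial} with $S=\fix T$, after the standard Lyapunov analysis. Fix $z\in\fix T$ and differentiate the energy $\tfrac{1}{2}\|x(t)-z\|^2$: using the identity $2\langle Tx(t)-x(t),x(t)-z\rangle=\|Tx(t)-z\|^2-\|x(t)-z\|^2-\|Tx(t)-x(t)\|^2$ together with $\|Tx(t)-z\|=\|Tx(t)-Tz\|\leq\|x(t)-z\|$, I would obtain
\[
\frac{d}{dt}\|x(t)-z\|^2 \;\leq\; -\lambda(t)\|Tx(t)-x(t)\|^2.
\]
This immediately yields that $\|x(t)-z\|$ is non-increasing, so $x$ is bounded and $\lim_{t\to+\infty}\|x(t)-z\|$ exists for every $z\in\fix T$ (the first hypothesis of Opial). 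Integration over $[0,+\infty)$ gives $\int_0^{+\infty}\lambda(t)\|Tx(t)-x(t)\|^2\,dt<+\infty$, and since $\lambda\leq 1$ also $\int_0^{+\infty}\|\dot x(t)\|^2\,dt<+\infty$, which is (i).

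The substantive work lies in proving (ii), namely $Tx(t)-x(t)\to 0$. I would first establish that $t\mapsto\|Tx(t)-x(t)\|^2$ is non-increasing; this is the continuous counterpart of the well-known discrete monotonicity along a Krasnosel'ski\u{\i}--Mann iteration. The key ingredient is Remark \ref{rem-abs-cont}(b): since $T$ is $1$-Lipschitz and $x$ is absolutely continuous, so is $Tx$, with $\|\tfrac{d}{dt}Tx(t)\|\leq\|\dot x(t)\|$ a.e. Differentiating and using $\|\dot x(t)\|=\lambda(t)\|Tx(t)-x(t)\|$, the cross term $2\langle \tfrac{d}{dt}Tx(t),Tx(t)-x(t)\rangle$ is bounded by Cauchy--Schwarz by $2\lambda(t)\|Tx(t)-x(t)\|^2$, which cancels the $-2\lambda(t)\|Tx(t)-x(t)\|^2$ coming from the autonomous part; thus $\frac{d}{dt}\|Tx(t)-x(t)\|^2\leq 0$ and the monotone limit $\ell:=\lim_{t\to+\infty}\|Tx(t)-x(t)\|$ exists. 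Now both alternatives on $\lambda$ force $\int_0^{+\infty}\lambda(t)\,dt=+\infty$ (in the first case because $\lambda(1-\lambda)\leq\lambda$, in the second trivially). If $\ell>0$, monotonicity would give $\|Tx(t)-x(t)\|^2\geq\ell^2$ for all $t$, whence $\int_0^{+\infty}\lambda(t)\|Tx(t)-x(t)\|^2\,dt=+\infty$, contradicting the bound from the first step. Therefore $\ell=0$, proving (ii). Statement (iii) is then immediate from $\|\dot x(t)\|=\lambda(t)\|Tx(t)-x(t)\|\leq\|Tx(t)-x(t)\|\to 0$.

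Finally, for (iv) I would apply Opial's Lemma \ref{opial} with $S=\fix T$: the first hypothesis has been established, and for the second, if $t_n\to+\infty$ with $x(t_n)\rightharpoonup\bar x$, then by (ii) we have $Tx(t_n)-x(t_n)\to 0$ strongly, so the demiclosedness principle (Lemma \ref{demi}) gives $\bar x\in\fix T$. Lemma \ref{opial} then produces $x_\infty\in\fix T$ with $x(t)\rightharpoonup x_\infty$ as $t\to+\infty$.

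I expect the hardest point to be the monotonicity of $t\mapsto\|Tx(t)-x(t)\|^2$: without it the integrability $\int\lambda\|Tx-x\|^2<\infty$ combined with $\int\lambda=\infty$ would not suffice, and one would likely be forced to treat the two assumptions on $\lambda$ by separate, case-dependent arguments (the second via $L^1$-integrability of $\|Tx-x\|^2$ and the first by some dichotomy on the size of $\lambda$).
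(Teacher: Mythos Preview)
Your proof is correct and follows the same skeleton as the paper: Lyapunov decay of $\|x(t)-z\|^2$, monotonicity of $t\mapsto\|Tx(t)-x(t)\|^2$ via Remark~\ref{rem-abs-cont}(b), then Opial's Lemma together with the demiclosedness principle. The Lyapunov inequalities are in fact identical, since $\lambda(1-\lambda)\|Tx-x\|^2+\|\dot x\|^2=\lambda\|Tx-x\|^2$.

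The one genuine difference is in the derivation of (ii). The paper splits into two cases: under $\int\lambda(1-\lambda)=+\infty$ it uses the bound $\int\lambda(1-\lambda)\|Tx-x\|^2<\infty$, while under $\inf\lambda>0$ it passes through $Tx-x\in L^2$ and invokes Lemma~\ref{fejer-cont2}. You instead observe that \emph{either} hypothesis forces $\int_0^\infty\lambda(t)\,dt=+\infty$, and combine this with the single integrability statement $\int\lambda\|Tx-x\|^2<\infty$ and the monotonicity of $\|Tx-x\|$ to conclude $\ell=0$ directly. This is a neat unification: it avoids the case split entirely and does not require Lemma~\ref{fejer-cont2}. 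It also makes transparent (as the paper only notes later, in Section~\ref{sec4}) that the natural minimal hypothesis on $\lambda$ is simply $\int_0^\infty\lambda=+\infty$.
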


\begin{proof} We rely on Lyapunov analysis combined with the Opial Lemma. We take an arbitrary $y\in\fix T$ and give an estimation for 
$\frac{d}{dt}\|x(t)-y\|^2$. Take an arbitrary $t\geq 0$. By \eqref{id-hilb}, the fact that $y\in\fix T$ and the nonexpansiveness of $T$ we obtain: 
\begin{align*}
\frac{d}{dt}\|x(t)-y\|^2   = & 2\<\dot x(t),x(t)-y\>=\|\dot x(t)+x(t)-y\|^2-\|x(t)-y\|^2-\|\dot x(t)\|^2\\
                           = & \|\lambda(t)(T(x(t))-y)+(1-\lambda(t))(x(t))-y)\|^2-\|x(t)-y\|^2-\|\dot x(t)\|^2\\
                          = & \lambda(t)\|T(x(t))-y\|^2+(1-\lambda(t))\|x(t)-y\|^2 \\
                            & - \lambda(t)(1-\lambda(t))\|T(x(t)-x(t))\|^2 -\|x(t)-y\|^2-\|\dot x(t)\|^2\\
                         \leq & -\lambda(t)(1-\lambda(t))\|T(x(t)-x(t))\|^2-\|\dot x(t)\|^2.
\end{align*}

Hence for all $t\geq 0$ we have that 
\begin{equation}\label{ineq-x-dtx}\frac{d}{dt}\|x(t)-y\|^2+\lambda(t)(1-\lambda(t))\|T(x(t)-x(t))\|^2+\|\dot x(t)\|^2\leq 0.\end{equation}

Since $\lambda(t)\in[0,1]$ for all $t\geq 0$, from \eqref{ineq-x-dtx} it follows that $t\mapsto\|x(t)-y\|$ is decreasing, 
hence $\lim_{t\rightarrow+\infty}\|x(t)-y\|$ exists. 
From here we obtain the boundedness of the trajectory and by integrating \eqref{ineq-x-dtx} we deduce also that $\int_0^{+\infty}\|\dot x(t)\|^2dt<+\infty$ and 
\begin{equation}\label{int-l-T}\int_0^{+\infty}\lambda(t)(1-\lambda(t))\|T(x(t))-x(t)\|^2dt<+\infty,\end{equation}
thus (i) holds. Since $y\in\fix T$ has been chosen arbitrary, the first assumption in the continuous version of Opial Lemma is fulfilled. 

We show in the following that $\lim_{t\rightarrow+\infty}(T(x(t))-x(t))$ exists and it is a real number. This is immediate if we show that 
the function $t\mapsto\frac{1}{2}\|T(x(t))-x(t)\|^2$ is decreasing. According to Remark \ref{rem-abs-cont}(b), the function 
$t\mapsto T(x(t))$ is almost everywhere differentiable and $\|\frac{d}{dt}T(x(t))\|\leq \|\dot x(t)\|$ holds for almost all $t\geq 0$. 
Moreover, by the first equation of \eqref{dyn-syst-KM} we have  
\begin{align*}
\frac{d}{dt}\left(\frac{1}{2}\|T(x(t))-x(t)\|^2\right) = & \<\frac{d}{dt}T(x(t))-\dot x(t),T(x(t))-x(t)\>\\
                                                       = & -\<\dot x(t),T(x(t))-x(t)\>+\<\frac{d}{dt}T(x(t)),T(x(t))-x(t)\>\\
                                                       = & -\lambda(t)\|T(x(t))-x(t)\|^2+\<\frac{d}{dt}T(x(t)),T(x(t))-x(t)\>\\
                                                     \leq & -\lambda(t)\|T(x(t))-x(t)\|^2+\|\dot x(t)\| \cdot \|T(x(t))-x(t)\|=0,
\end{align*}
hence $\lim_{t\rightarrow + \infty}(T(x(t))-x(t))$ exists and is a real number.

(a) Firstly, let us assume that $\int_0^{+\infty}\lambda(t)(1-\lambda(t))dt=+\infty$. This immediately implies by \eqref{int-l-T} 
that $\lim_{t\rightarrow+\infty}(T(x(t))-x(t))=0$, thus (ii) holds. Taking into account that $\lambda$ is bounded, 
from \eqref{dyn-syst-KM} and (ii) we deduce (iii). For the last property of the theorem we need to verify the second assumption  
of the Opial Lemma. Let $\ol x\in {\cal H}$ be a weak sequential cluster point of $x$, that is, there exists a sequence $t_n\rightarrow+\infty$ 
(as $n\rightarrow+\infty$) such that $(x(t_n))_{n\in\N}$ converges weakly to $\ol x$. Applying Lemma \ref{demi} and (ii) we obtain $\ol x\in\fix T$
and the conclusion follows. 

(b) We suppose now that $\inf_{t\geq 0}\lambda(t)>0$. From the first relation of \eqref{dyn-syst-KM} and (i) we easily deduce 
that $Tx-x\in L^2([0,+\infty),{\cal H})$, hence the function $t\mapsto\frac{1}{2}\|T(x(t))-x(t)\|^2$ belongs to $L^1([0,+\infty))$. Since 
$\frac{d}{dt}\left(\frac{1}{2}\|T(x(t))-x(t)\|^2\right)\leq 0$ for almost all $t\geq 0$, we obtain by applying Lemma \ref{fejer-cont2} that 
$\lim_{t\rightarrow+\infty}\|T(x(t))-x(t)\|^2=0$, thus (ii) holds. The rest of the proof can be done in the lines of case (a) considered above.  
\end{proof}

\begin{remark}\label{lambda} Notice that the function $\lambda_1(t)=\frac{1}{t+1}$, for all $t\geq 0$, verifies the condition 
$\int_0^{+\infty}\lambda_1(t)(1-\lambda_1(t))dt=+\infty$, while $\inf_{t\geq 0}\lambda_1(t)>0$ is not fulfilled. On the other hand, the function 
$\lambda_2(t)=1$, for all $t\geq 0$, verifies the condition $\inf_{t\geq 0}\lambda_2(t)>0$, while $\int_0^{+\infty}\lambda_2(t)(1-\lambda_2(t))dt=+\infty$ fails. 
This shows that the two assumptions on $\lambda$ under which the conclusions of Theorem \eqref{conv-KM} are valid are independent.
\end{remark}

\begin{remark}\label{contdisc} The explicit discretization of \eqref{dyn-syst-KM} with respect to the time variable $t$, with step size $h_n>0$, 
yields for an initial point $x_0$ the following iterative scheme: 
$$x_{n+1}=x_n+h_n\lambda_n(Tx_n-x_n) \ \forall n \geq 0.$$
By taking $h_n=1$ this becomes \begin{equation}\label{KM-discrete}x_{n+1}=x_n+\lambda_n(Tx_n-x_n) \ \forall n \geq 0,\end{equation}
which is the classical Krasnosel'ski\u{\i}--Mann algorithm for finding the set of fixed points of the nonexpansive operator $T$ 
(see \cite[Theorem 5.14]{bauschke-book}). Let us mention that the convergence of \eqref{KM-discrete} is guaranteed under the condition 
$\sum_{n \in \N} \lambda_n(1-\lambda_n)=+\infty$. Notice that in case $\lambda_n=1$ for all $n \in\N$ and for an initial point $x_0$ different from $0$, 
the convergence of \eqref{KM-discrete} can fail, as it happens for instance for the operator $T=-\id$. 
In contrast to this, as pointed out in Theorem \ref{conv-KM}, the dynamical system \eqref{dyn-syst-KM} has a strong global solution and the convergence 
of the trajectory is guaranteed also in case $\lambda(t)=1$ for all $t\geq 0$. 
\end{remark}

An immediate consequence of Theorem \ref{conv-KM} is the following corollary, where we consider dynamical systems involving averaged operators. 
Let $\alpha\in(0,1)$ be fixed. We say that $R:{\cal H}\rightarrow{\cal H}$ is {\it $\alpha$-averaged} if there exists a nonexpansive operator 
$T:{\cal H}\rightarrow{\cal H}$ such that $R=(1-\alpha)\id+\alpha T$. For $\alpha=\frac{1}{2}$ we obtain as an important representative of this class the
firmly nonexpansive operators. For properties and other insides concerning these families  of operators we refer to \cite{bauschke-book}. 

\begin{corollary}\label{conv-KM-av} Let $\alpha\in (0,1)$, $R:{\cal H}\rightarrow {\cal H}$ be $\alpha$-averaged such that $\fix R\neq\emptyset$, 
$\lambda:[0,+\infty)\rightarrow [0,1/\alpha]$ a Lebesgue measurable function and 
$x_0\in {\cal H}$. Suppose that one of the following conditions is fulfilled: 
$$\int_0^{+\infty}\lambda(t)(1-\alpha\lambda(t))dt=+\infty \ \mbox{or} \ \inf_{t\geq 0}\lambda(t)>0.$$ Let 
$x:[0,+\infty)\rightarrow{\cal H}$ be the unique strong global solution of the dynamical system 
\begin{equation}\label{dyn-syst-KM-av}\left\{
\begin{array}{ll}
\dot x(t)=\lambda(t)\big(R(x(t))-x(t)\big)\\
x(0)=x_0.
\end{array}\right.\end{equation} Then the following statements are true: 

(i) the trajectory $x$ is bounded and $\int_0^{+\infty}\|\dot x(t)\|^2dt<+\infty$;  

(ii) $\lim_{t\rightarrow+\infty}(R(x(t))-x(t))=0$; 

(iii) $\lim_{t\rightarrow+\infty}\dot x(t)=0$;

(iv) $x(t)$ converges weakly to a point in $\fix R$, as $t\rightarrow+\infty$.
\end{corollary}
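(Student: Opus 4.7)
The plan is to reduce the corollary to Theorem \ref{conv-KM} by exploiting the very definition of an $\alpha$-averaged operator. Since $R$ is $\alpha$-averaged, there exists a nonexpansive $T:{\cal H}\rightarrow{\cal H}$ with $R=(1-\alpha)\id+\alpha T$, hence for every $x\in{\cal H}$ one has the identity $R(x)-x=\alpha(T(x)-x)$. Substituting this into \eqref{dyn-syst-KM-av} rewrites the system as
\begin{equation*}
\dot x(t)=\alpha\lambda(t)\bigl(T(x(t))-x(t)\bigr),\quad x(0)=x_0,
\end{equation*}
which is exactly \eqref{dyn-syst-KM} with the new step-size function $\mu(t):=\alpha\lambda(t)$.

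Next I would verify that $\mu$ and $T$ satisfy the hypotheses of Theorem \ref{conv-KM}. Since $\lambda$ takes values in $[0,1/\alpha]$, $\mu$ is a Lebesgue measurable function with range in $[0,1]$. Moreover, $\fix T=\fix R$ (the implication $R(x)=x\Leftrightarrow T(x)=x$ follows at once from $R(x)-x=\alpha(T(x)-x)$ and $\alpha>0$), so $\fix T\neq\emptyset$. Finally, since $\alpha\in(0,1)$, the condition $\inf_{t\geq 0}\lambda(t)>0$ is equivalent to $\inf_{t\geq 0}\mu(t)>0$, and
\begin{equation*}
\int_0^{+\infty}\mu(t)\bigl(1-\mu(t)\bigr)\,dt=\alpha\int_0^{+\infty}\lambda(t)\bigl(1-\alpha\lambda(t)\bigr)\,dt,
\end{equation*}
so either of the two alternative assumptions on $\lambda$ transfers to the corresponding alternative assumption on $\mu$.

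Therefore Theorem \ref{conv-KM} applies to the reformulated system and yields directly the four conclusions: boundedness and square-integrability of $\dot x$ in (i), $\lim_{t\to+\infty}(T(x(t))-x(t))=0$ which multiplied by $\alpha$ gives (ii), $\lim_{t\to+\infty}\dot x(t)=0$ in (iii), and weak convergence to a point of $\fix T=\fix R$ in (iv). There is essentially no obstacle here: the only thing to be mindful of is bookkeeping the factor $\alpha$ correctly when translating the two admissible conditions on $\lambda$, which is what motivates the asymmetric appearance of $1-\alpha\lambda(t)$ rather than $1-\lambda(t)$ in the statement.
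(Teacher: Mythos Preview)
Your proof is correct and follows exactly the same approach as the paper: rewrite \eqref{dyn-syst-KM-av} via $R=(1-\alpha)\id+\alpha T$ as a system of the form \eqref{dyn-syst-KM} with step-size $\alpha\lambda(\cdot)$, note that $\fix R=\fix T$, and invoke Theorem~\ref{conv-KM}. You simply spell out in more detail the verification that the hypotheses transfer, which the paper leaves implicit.
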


\begin{proof} Since $R$ is $\alpha$-averaged, there exists a nonexpansive operator 
$T:{\cal H}\rightarrow{\cal H}$ such that $R=(1-\alpha)\id+\alpha T$. The conclusion follows by taking into account that 
\eqref{dyn-syst-KM-av} is equivalent to $$\left\{
\begin{array}{ll}
\dot x(t)=\alpha\lambda(t)\big(T(x(t))-x(t)\big)\\
x(0)=x_0
\end{array}\right.$$ and $\fix R=\fix T$.  
\end{proof}

In the following we investigate the convergence rate of the trajectories of the dynamical system \eqref{dyn-syst-KM}. This will be done in 
terms of the fixed point residual function $t\mapsto\|Tx(t)-x(t)\|$ and  of $t\mapsto\|\dot x(t)\|$. Notice that convergence rates for the discrete iteratively generated algorithm  
\eqref{KM-discrete} have been investigated in \cite{corman-yuan, davis-yin, liang-fadili-peyre}. 

\begin{theorem}\label{conv-KM-rate1} Let $T:{\cal H}\rightarrow {\cal H}$ be a nonexpansive mapping such that $\fix T\neq\emptyset$, 
$\lambda:[0,+\infty)\rightarrow [0,1]$ a Lebesgue measurable function and 
$x_0\in {\cal H}$. Suppose that 
$$0< \inf_{t\geq 0}\lambda(t)\leq\sup_{t\geq 0}\lambda(t)<1.$$ 
Let  $x:[0,+\infty)\rightarrow{\cal H}$ be the unique strong global solution of \eqref{dyn-syst-KM}. Then 
for all $t> 0$ we have $$\|\dot x(t)\|\leq \|T(x(t))-x(t)\|\leq\frac{d(x_0,\fix T)}{\sqrt{\ul \tau t}},$$
where $\ul\tau=\inf_{t\geq0}\lambda(t)(1-\lambda(t))>0$. 
\end{theorem}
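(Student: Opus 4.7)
The first inequality is immediate from the defining ODE: since $\lambda(t)\in[0,1]$, we have $\|\dot x(t)\|=\lambda(t)\|T(x(t))-x(t)\|\le\|T(x(t))-x(t)\|$. So the real work is in bounding the fixed point residual by $d(x_0,\fix T)/\sqrt{\ul\tau\,t}$.

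My plan is to combine two ingredients that already appear in the proof of Theorem \ref{conv-KM}. First, the Lyapunov estimate
\[
\frac{d}{dt}\|x(t)-y\|^2+\lambda(t)(1-\lambda(t))\|T(x(t))-x(t)\|^2+\|\dot x(t)\|^2\leq 0,
\]
valid for every $y\in\fix T$, integrated from $0$ to $t$, yields
\[
\ul\tau\int_0^t\|T(x(s))-x(s)\|^2 ds\leq\int_0^t\lambda(s)(1-\lambda(s))\|T(x(s))-x(s)\|^2 ds\leq\|x_0-y\|^2.
\]
Since $y\in\fix T$ is arbitrary, the right-hand side can be replaced by $d(x_0,\fix T)^2$.

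Second, I will invoke the fact, also established in the proof of Theorem \ref{conv-KM}, that the map $s\mapsto\tfrac12\|T(x(s))-x(s)\|^2$ is decreasing (this follows from $\|\tfrac{d}{ds}T(x(s))\|\le\|\dot x(s)\|$ via Remark \ref{rem-abs-cont}(b) together with the ODE). Monotonicity then gives the standard bound
\[
t\,\|T(x(t))-x(t)\|^2\leq\int_0^t\|T(x(s))-x(s)\|^2 ds\leq\frac{d(x_0,\fix T)^2}{\ul\tau},
\]
and taking square roots produces the desired estimate. Note that $\ul\tau>0$ under the hypothesis $0<\inf\lambda\leq\sup\lambda<1$, so the division is legitimate.

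There is no serious obstacle: both the integrated Lyapunov inequality and the monotonicity of the residual have already been proved in Theorem \ref{conv-KM}, so the argument is essentially a one-line combination of these facts with the inequality $\ul\tau\leq\lambda(s)(1-\lambda(s))$. The only subtlety worth flagging is that, to get $d(x_0,\fix T)$ rather than $\|x_0-y\|$ for a particular $y$, one must take the infimum over $y\in\fix T$ on the right-hand side of the integrated Lyapunov bound before the monotonicity step.
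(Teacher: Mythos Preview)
Your proposal is correct and follows essentially the same route as the paper: integrate the Lyapunov inequality \eqref{ineq-x-dtx} to bound the weighted integral of the residual by $\|x_0-y\|^2$, use the monotonicity of $s\mapsto\|T(x(s))-x(s)\|^2$ together with $\lambda(s)(1-\lambda(s))\ge\ul\tau$ to extract $t\,\ul\tau\,\|T(x(t))-x(t)\|^2$, and then take the infimum over $y\in\fix T$. The only cosmetic difference is that the paper takes the infimum over $y$ at the very end rather than before the monotonicity step; as you note, this is immaterial.
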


\begin{proof} Take an arbitrary $y\in \fix T$ and $t>0$. From \eqref{ineq-x-dtx} we have for all $s\geq 0$: 

\begin{equation}\label{ineq-x-dtx-rate}\frac{d}{ds}\|x(s)-y\|^2+\lambda(s)(1-\lambda(s))\|T(x(s)-x(s))\|^2\leq 0.\end{equation}

By integrating we obtain $$\int_0^t\lambda(s)(1-\lambda(s))\|T(x(s))-x(s)\|^2 ds\leq \|x_0-y\|^2-\|x(t)-y\|^2\leq\|x_0-y\|^2.$$
We have seen in the proof of Theorem \ref{conv-KM} that $t\mapsto\frac{1}{2}\|T(x(t))-x(t)\|^2$ is decreasing, thus the 
last inequality yields  $$t\ul\tau\|T(x(t))-x(t)\|^2\leq \|x_0-y\|^2.$$
Since this inequality holds for an arbitrary $y\in\fix T$, we get for all $t\geq 0:$ 
$$\sqrt{t\ul\tau}\|T(x(t))-x(t)\|\leq d(x_0,\fix T).$$

By taking also into account \eqref{dyn-syst-KM}, the conclusion follows. 
\end{proof}

Next we show that the convergence rates of fixed point residual function $t\mapsto\|Tx(t)-x(t)\|$ and  of $t\mapsto\|\dot x(t)\|$ can be improved to $o\left(\frac{1}{\sqrt t}\right)$. 

\begin{theorem}\label{conv-KM-rate2} Let $T:{\cal H}\rightarrow {\cal H}$ be a nonexpansive mapping such that $\fix T\neq\emptyset$, 
$\lambda:[0,+\infty)\rightarrow [0,1]$ a Lebesgue measurable function and 
$x_0\in {\cal H}$. Suppose that 
$$0<\inf_{t\geq 0}\lambda(t)\leq\sup_{t\geq 0}\lambda(t)<1.$$ 
Let $x:[0,+\infty)\rightarrow{\cal H}$ be the unique strong global solution of \eqref{dyn-syst-KM}. Then for all $t\geq 0$ we have 
$$t\|\dot x(t)\|^2\leq t\|T(x(t))-x(t)\|^2\leq \frac{2}{{\ul\tau}} \int_{t/2}^t\lambda(s)(1-\lambda(s))\|T(x(s))-x(s)\|^2 ds,$$
where $\ul\tau=\inf_{t\geq0}\lambda(t)(1-\lambda(t))>0$ and $\lim_{t\rightarrow+\infty}\int_{t/2}^t\lambda(s)(1-\lambda(s))\|T(x(s))-x(s)\|^2 ds=0$.
\end{theorem}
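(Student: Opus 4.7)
The plan is to exploit the fact, already established in the proof of Theorem \ref{conv-KM}, that the function $t \mapsto \frac{1}{2}\|T(x(t))-x(t)\|^2$ is decreasing, together with the integrability bound $\int_0^{+\infty}\lambda(s)(1-\lambda(s))\|T(x(s))-x(s)\|^2 ds < +\infty$ from Theorem \ref{conv-KM}(i). The strategy is a standard ``mean-value on the interval $[t/2,t]$'' argument: monotonicity of the residual lets us replace the value at $t$ by an average over $[t/2,t]$, and the integrability then forces the average to vanish.

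First I would handle the trivial inequality on the left. Since $\dot x(t) = \lambda(t)(T(x(t))-x(t))$ and $\lambda(t) \in [0,1]$, we immediately get $\|\dot x(t)\|^2 \leq \lambda(t)^2 \|T(x(t))-x(t)\|^2 \leq \|T(x(t))-x(t)\|^2$, so the leftmost inequality holds after multiplication by $t$.

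Next, for the main inequality, observe that the assumption $0 < \inf_{t\geq 0}\lambda(t) \leq \sup_{t\geq 0}\lambda(t) < 1$ ensures $\ul\tau = \inf_{t\geq 0}\lambda(t)(1-\lambda(t)) > 0$. Using the monotonicity of $s \mapsto \|T(x(s))-x(s)\|^2$ recalled above, for every $s \in [t/2,t]$ we have $\|T(x(t))-x(t)\|^2 \leq \|T(x(s))-x(s)\|^2$. Integrating over $[t/2,t]$ yields
\begin{equation*}
\frac{t}{2}\|T(x(t))-x(t)\|^2 \leq \int_{t/2}^t \|T(x(s))-x(s)\|^2 ds \leq \frac{1}{\ul\tau}\int_{t/2}^t \lambda(s)(1-\lambda(s))\|T(x(s))-x(s)\|^2 ds,
\end{equation*}
where the second step uses $1 \leq \lambda(s)(1-\lambda(s))/\ul\tau$. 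Multiplying by $2$ gives the desired bound.

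Finally, to prove that the integral on the right tends to $0$, I invoke Theorem \ref{conv-KM}(i), which guarantees $s \mapsto \lambda(s)(1-\lambda(s))\|T(x(s))-x(s)\|^2 \in L^1([0,+\infty))$. The Cauchy criterion for improper integrals (or simply the estimate $\int_{t/2}^t \leq \int_{t/2}^{+\infty}$) then forces $\int_{t/2}^t \lambda(s)(1-\lambda(s))\|T(x(s))-x(s)\|^2 ds \to 0$ as $t \to +\infty$. There is no real obstacle here; the only point requiring care is making explicit that the decreasing property of the squared residual, which was proved under the standing hypothesis $\lambda(t) \in [0,1]$ in Theorem \ref{conv-KM}, carries over verbatim under the strengthened assumption of the present theorem.
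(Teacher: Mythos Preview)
Your proof is correct and follows essentially the same route as the paper: both use the monotonicity of $s\mapsto\|T(x(s))-x(s)\|^2$ on $[t/2,t]$ together with the lower bound $\lambda(s)(1-\lambda(s))\geq\ul\tau$ to compare $t\|T(x(t))-x(t)\|^2$ with the tail integral, and both conclude via the integrability established in \eqref{int-l-T}. The only cosmetic difference is the order in which the weight $\lambda(s)(1-\lambda(s))$ is introduced; the paper inserts it before integrating and then bounds $\int_{t/2}^t\lambda(s)(1-\lambda(s))\,ds\geq \ul\tau\,t/2$, whereas you integrate first and insert it afterwards.
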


\begin{proof} Define the function $f:[0,+\infty)\rightarrow[0,+\infty)$,
$$f(t)=\int_0^t\lambda(s)(1-\lambda(s))\|T(x(s))-x(s)\|^2 ds.$$
According to \eqref{int-l-T}  we have that $\lim_{t\rightarrow+\infty}f(t)\in\R$. 

Since $t\mapsto\frac{1}{2}\|T(x(t))-x(t)\|^2$ is decreasing (see the proof of Theorem \ref{conv-KM}), we have for all $t\geq 0:$
\begin{align*}
\|T(x(t))-x(t)\|^2\int_{t/2}^t\lambda(s)(1-\lambda(s))ds \leq & \int_{t/2}^t\lambda(s)(1-\lambda(s))\|T(x(s))-x(s)\|^2 ds\\
                                                            = & f(t)-f(t/2).
 \end{align*}
Taking into account the definition of $\ul \tau$, we easily derive 
$$\frac{{\ul\tau}}{2}t\|T(x(t))-x(t)\|^2\leq \int_{t/2}^t\lambda(s)(1-\lambda(s))\|T(x(s))-x(s)\|^2 ds,$$
and the conclusion follows by using again \eqref{dyn-syst-KM}. 
\end{proof}

The rest of the paper is dedicated to the formulation and investigation of a continuous version of the forward-backward algorithm. 
For readers convenience let us recall some standard notions and results in monotone operator theory 
which will be used in the following (see also \cite{bo-van, bauschke-book, simons}). For an arbitrary set-valued operator $A:{\cal H}\rightrightarrows {\cal H}$ we denote by 
$\gr A=\{(x,u)\in {\cal H}\times {\cal H}:u\in Ax\}$ its graph.
We use also the notation $\zer A=\{x\in{\cal{H}}:0\in Ax\}$ for the set of zeros of $A$. We say that $A$ is monotone, if $\langle x-y,u-v\rangle\geq 0$ for all $(x,u),(y,v)\in\gr A$. A monotone operator $A$ is said to be maximally monotone, if there exists no proper monotone extension of the graph of $A$ on ${\cal H}\times {\cal H}$.
The resolvent of $A$, $J_A:{\cal H} \rightrightarrows {\cal H}$, is defined by $J_A=(\id_{{\cal H}}+A)^{-1}$, where $\id_{{\cal H}} :{\cal H} \rightarrow {\cal H}, \id_{\cal H}(x) = x$ for all $x \in {\cal H}$, is the identity operator on ${\cal H}$. Moreover, if $A$ is maximally monotone, then $J_A:{\cal H} \rightarrow {\cal H}$ is single-valued and maximally monotone
(see \cite[Proposition 23.7 and Corollary 23.10]{bauschke-book}). For an arbitrary $\gamma>0$ we have (see \cite[Proposition 23.2]{bauschke-book})
\begin{equation}p\in J_{\gamma A}x \ \mbox{if and only if} \ (p,\gamma^{-1}(x-p))\in\gr A.\end{equation}

The operator $A$ is said to be uniformly monotone if there exists an increasing function
$\phi_A : [0,+\infty) \rightarrow [0,+\infty]$ that vanishes only at $0$, and
$\langle x-y,u-v \rangle \geq \phi_A \left( \| x-y \|\right)$ for every $(x,u)\in\gr A$ and $(y,v) \in \gr A$. A well-known 
class of operators fulfilling this property is the one of the strongly monotone operators.
Let $\gamma>0$ be arbitrary. We say that $A$ is $\gamma$-strongly monotone, 
if $\langle x-y,u-v\rangle\geq \gamma\|x-y\|^2$ for all $(x,u),(y,v)\in\gr A$. We consider also the class of cocoercive operators: 
$B:{\cal H}\rightarrow {\cal H}$ is $\gamma$-cocoercive, 
if $\langle x-y,Bx-By\rangle\geq \gamma\|Bx-By\|^2$ for all $x,y\in {\cal H}$. 

\begin{theorem}\label{fb-dyn} Let $A:{\cal H}\rightrightarrows {\cal H}$ be a maximally monotone operator, $\beta>0$ and 
$B:{\cal H}\rightarrow {\cal H}$ be $\beta$-cocoercive such that $\zer(A+B)\neq\emptyset$. Let $\gamma\in(0,2\beta)$ and set 
$\delta=\min\{1,\beta/\gamma\}+1/2$. Let $\lambda:[0,+\infty)\rightarrow [0,\delta]$ be a Lebesgue measurable function and 
$x_0\in {\cal H}$. Suppose that one if the following conditions is fulfilled: 
$$\int_0^{+\infty}\lambda(t)(\delta-\lambda(t))dt=+\infty \ \mbox{or} \ \inf_{t\geq 0}\lambda(t)>0.$$ Let 
$x:[0,+\infty)\rightarrow{\cal H}$ be the unique strong global solution of 

\begin{equation}\label{dyn-syst-fb}\left\{
\begin{array}{ll}
\dot x(t)=\lambda(t)\left[J_{\gamma A}\Big(x(t)-\gamma B(x(t))\Big)-x(t)\right]\\
x(0)=x_0.
\end{array}\right.\end{equation}
 Then the following statements are true: 

(i) the trajectory $x$ is bounded and $\int_0^{+\infty}\|\dot x(t)\|^2dt<+\infty$;  

(ii) $\lim_{t\rightarrow+\infty}\left[J_{\gamma A}\Big(x(t)-\gamma B(x(t))\Big)-x(t)\right]=0$; 

(iii) $\lim_{t\rightarrow+\infty}\dot x(t)=0$;

(iv) $x(t)$ converges weakly to a point in $\zer(A+B)$, as $t\rightarrow+\infty$.

\noindent Suppose that $\inf_{t\geq 0}\lambda(t)>0$. Then the following hold: 

(v) if $y\in\zer(A+B)$, then $\lim_{t\rightarrow+\infty}B(x(t))=By$ and $B$ is constant on $\zer(A+B)$; 

(vi) if $A$ or $B$ is uniformly monotone, then $x(t)$ converges strongly to the unique point in $\zer(A+B)$, as $t\rightarrow+\infty$.
\end{theorem}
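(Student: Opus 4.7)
The plan is to view the forward--backward mapping $T := J_{\gamma A}\circ(\id - \gamma B)$ as an averaged operator and apply Corollary~\ref{conv-KM-av} for parts (i)--(iv), then refine the energy analysis for (v)--(vi). Since $J_{\gamma A}$ is firmly nonexpansive and $\gamma\in(0,2\beta)$ makes $\id - \gamma B$ a $\gamma/(2\beta)$-averaged operator, the standard composition rule gives that $T$ is $\alpha_0$-averaged with $\alpha_0 = 2\beta/(4\beta-\gamma)$. An elementary case distinction on whether $\gamma\leq\beta$ or $\beta<\gamma<2\beta$ shows $\alpha_0\leq 1/\delta$ (with equality at $\gamma\in\{\beta,2\beta\}$), and since the averaging constant may always be relaxed upward, $T$ is also $(1/\delta)$-averaged. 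The identity $\fix T = \zer(A+B)$ is immediate from the characterization of $J_{\gamma A}$, so Corollary~\ref{conv-KM-av} applied with $R = T$ and $\alpha = 1/\delta$ yields (i)--(iv); the hypothesis $\int_0^{+\infty}\lambda(t)(\delta-\lambda(t))\,dt = +\infty$ coincides, up to the positive factor $1/\delta$, with the condition required there.

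For (v) I aim for an energy estimate of the form $\frac{d}{dt}\|x(t)-y\|^2 \leq -\lambda(t)\gamma(2\beta-\gamma)\|B(x(t))-By\|^2 + (\text{nonpositive terms})$. Writing $z(t) := J_{\gamma A}(x(t)-\gamma B(x(t)))$ so that $\dot x(t) = \lambda(t)(z(t)-x(t))$, identity \eqref{id-hilb} gives $\frac{d}{dt}\|x(t)-y\|^2 = \lambda(t)[\|z(t)-y\|^2 - \|x(t)-y\|^2 - \|z(t)-x(t)\|^2]$. To control $\|z(t)-y\|^2$ I use firm nonexpansiveness of $J_{\gamma A}$ at the points $x(t)-\gamma B(x(t))$ and $y-\gamma By$ together with $\beta$-cocoercivity of $B$: the cross term $-2\gamma\langle x(t)-y, B(x(t))-By\rangle$ is dominated by $-2\gamma\beta\|B(x(t))-By\|^2$, producing precisely the factor $\gamma(2\beta-\gamma)>0$. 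Integrating and using $\inf\lambda>0$ yields that $F(t) := \|B(x(t))-By\|^2$ belongs to $L^1([0,+\infty))$. To pass to pointwise convergence I apply Lemma~\ref{fejer-cont2} to $F$: it is locally absolutely continuous by Remark~\ref{rem-abs-cont}(b) (since $B$ is $(1/\beta)$-Lipschitz), and the bound $F'(t)\leq (1/\beta)(\|\dot x(t)\|^2+\|B(x(t))-By\|^2)$ supplies an $L^1$ majorant via (i) and the previous step. That $B$ is constant on $\zer(A+B)$ is standard: for $y_1,y_2\in\zer(A+B)$ monotonicity of $A$ at $(y_i,-By_i)$ gives $\langle y_1-y_2, By_2-By_1\rangle\geq 0$ while cocoercivity gives $\langle y_1-y_2, By_1-By_2\rangle\geq\beta\|By_1-By_2\|^2$, forcing $By_1 = By_2$.

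For (vi), uniqueness of the zero follows from a variant of the previous argument (one of the bounds $\geq 0$ is strengthened to $\geq\phi_A(\|y_1-y_2\|)$ or $\geq\phi_B(\|y_1-y_2\|)$), so let $y$ denote the unique zero. If $A$ is uniformly monotone with modulus $\phi_A$, I exploit the inclusions $(z(t),\gamma^{-1}(x(t)-\gamma B(x(t))-z(t)))\in\gr A$ and $(y,-By)\in\gr A$ to obtain $\gamma\phi_A(\|z(t)-y\|) \leq \langle z(t)-y, x(t)-z(t)\rangle - \gamma\langle z(t)-y, B(x(t))-By\rangle$; the right-hand side vanishes as $t\to+\infty$ by (ii), by (v), and by boundedness of $z$ (inherited from (i) and (ii)), so the fact that $\phi_A$ vanishes only at $0$ forces $\|z(t)-y\|\to 0$, and combining with (ii) gives $x(t)\to y$ strongly. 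If instead $B$ is uniformly monotone with modulus $\phi_B$, the argument is even shorter: $\phi_B(\|x(t)-y\|)\leq\langle x(t)-y, B(x(t))-By\rangle\leq\|x(t)-y\|\cdot\|B(x(t))-By\|\to 0$ by boundedness and (v). The main obstacle is the derivation of (v): beyond the firm-nonexpansive/cocoercive algebra that isolates the factor $\gamma(2\beta-\gamma)$, one must verify both hypotheses of Lemma~\ref{fejer-cont2}, which requires the Lipschitz regularity of $B$ inherited from cocoercivity in order to produce an $L^1$ majorant for $F'$.
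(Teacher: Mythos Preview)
Your proof is correct and follows essentially the same route as the paper's: write $T=J_{\gamma A}\circ(\id-\gamma B)$, identify it as $(1/\delta)$-averaged, apply Corollary~\ref{conv-KM-av} for (i)--(iv), then for (v) derive the energy inequality producing the factor $\gamma(2\beta-\gamma)$ and invoke Lemma~\ref{fejer-cont2}, and for (vi) use the graph inclusion coming from the resolvent together with (ii), (v) and boundedness. The only cosmetic differences are that the paper cites \cite[Proposition~4.32]{bauschke-book} to get the averaging constant $1/\delta$ directly (whereas you compute the sharper Combettes--Yamada constant $2\beta/(4\beta-\gamma)$ and then relax it upward), and in (vi) the paper writes the point $z(t)$ as $\frac{1}{\lambda(t)}\dot x(t)+x(t)$ rather than introducing separate notation; otherwise the arguments coincide.
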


\begin{proof} It is immediate that the dynamical system \eqref{dyn-syst-fb} can be written in the form 

\begin{equation}\label{dyn-syst-KM-fb}\left\{
\begin{array}{ll}
\dot x(t)=\lambda(t)\big(T(x(t))-x(t)\big)\\
x(0)=x_0,
\end{array}\right.\end{equation}
where $T=J_{\gamma A}\circ(\id -\gamma B).$ According to \cite[Corollary 23.8 and Remark 4.24(iii)]{bauschke-book}, $J_{\gamma A}$ is $1/2$-cocoercive. 
Moreover, by \cite[Proposition 4.33]{bauschke-book}, $\id -\gamma B$ is $\gamma/(2\beta)$-averaged. Combining this with 
\cite[Proposition 4.32]{bauschke-book}, we derive that $T$ is $1/\delta$-averaged. The statements (i)-(iv) follow now from 
Corollary \ref{conv-KM-av} by noticing that $\fix T=\zer(A+B)$, see \cite[Proposition 25.1(iv)]{bauschke-book}. 

We suppose in the following that $\inf_{t\geq 0}\lambda(t)>0$. 

(v) The fact that $B$ is constant on $\zer(A+B)$ follows from the cocoercivity of $B$ and the monotonicity of $A$. A proof of this statement when $A$ is 
the subdifferential of a proper, convex and lower semicontinuous function is given in \cite[Lema 1.7]{abbas-att-arx14}. 

We use the following inequality:
\begin{equation}\label{ineq-T-fb}\|Tx-Ty\|^2\leq\|x-y\|^2-\gamma(2\beta-\gamma)\|Bx-By\|^2 \ \forall (x,y)\in {\cal H}\times {\cal H},\end{equation}
which follows from the nonexpansiveness property of the resolvent and the cocoercivity of $B$: 
\begin{align*}
\|Tx-Ty\|^2\leq & \|x-y-\gamma(Bx-By)\|^2=\|x-y\|^2-2\gamma\<x-y,Bx-By\>+\gamma^2\|Bx-By\|^2\\
\leq & \|x-y\|^2-\gamma(2\beta-\gamma)\|Bx-By\|^2.
\end{align*}
Take an arbitrary $y\in\zer(A+B)=\fix T$. From the first part of the proof of Theorem \ref{conv-KM} and \eqref{ineq-T-fb} we get for all $t\geq 0$ 
\begin{align*}
& \frac{d}{dt}\|x(t)-y\|^2+\lambda(t)(1-\lambda(t))\|T(x(t)-x(t))\|^2+\|\dot x(t)\|^2 \\
& = \lambda(t)\|T(x(t))-y\|^2-\lambda(t)\|x(t)-y\|^2\leq-\gamma(2\beta-\gamma)\lambda(t)\|B(x(t))-By\|^2.
\end{align*}

Taking into account that $\inf_{t\geq 0}\lambda(t)>0$ and $0<\gamma<2\beta$, by integrating the above inequality 
we obtain $$\int_0^{+\infty}\|B(x(t))-By\|^2dt<+\infty.$$

Since $B$ is $1/\beta$-Lipschitz (this follows from the $\beta$-cocoercivity of $B$ by applying the 
Cauchy-Schwarz inequality) and $t\mapsto\|\dot x(t)\|\in L^2([0,+\infty))$, from Remark \ref{rem-abs-cont}(b) we derive that 
$t\mapsto\frac{d}{dt} B(x(t))\in L^2([0,+\infty),{\cal H})$. From the Cauchy-Schwarz inequality we obtain for all $t\geq 0$
$$\frac{d}{dt}\left(\left\|B(x(t))-By\right\|^2\right)=2\<\frac{d}{dt}B(x(t)),B(x(t))-By\>\leq \left\|\frac{d}{dt}B(x(t))\right\|^2+\|B(x(t))-By\|^2.$$
Combining these considerations with Lemma \ref{fejer-cont2}, we conclude that $B(x(t))$ converges strongly to $By$, as $t\rightarrow+\infty$. 

(vi) Suppose that $A$ is uniformly monotone and let $y$ be the unique point in $\zer(A+B)$. According to \eqref{dyn-syst-fb} and the 
definition of the resolvent, we have $$-B(x(t))-\frac{1}{\gamma\lambda(t)}\dot x(t)\in A\left(\frac{1}{\lambda(t)}\dot x(t)+x(t)\right) \ \forall t\geq 0.$$
From $-By\in Ay$ we get for all $t \geq 0$  the inequality
$$\phi_A\left(\left\|\frac{1}{\lambda(t)}\dot x(t)+x(t)-y\right\|\right)\leq \<\frac{1}{\lambda(t)}\dot x(t)+x(t)-y,-B(x(t))-\frac{1}{\gamma\lambda(t)}\dot x(t)+By\>,$$
where $\phi_A:[0,+\infty)\rightarrow[0,+\infty]$ is increasing and vanishes only at $0$. 

The monotonicity of $B$ implies 
\begin{align*}
& \phi_A\left(\left\|\frac{1}{\lambda(t)}\dot x(t)+x(t)-y\right\|\right)\leq -\frac{1}{\gamma\lambda^2(t)}\|\dot x(t)\|^2 +\frac{1}{\lambda(t)}\<\dot x(t),-B(x(t))+By\> +\\
& \<x(t)-y,-B(x(t))+By\>-\frac{1}{\gamma\lambda(t)}\<\dot x(t),x(t)-y\> \\
\leq & -\frac{1}{\gamma\lambda^2(t)}\|\dot x(t)\|^2 +\frac{1}{\lambda(t)}\<\dot x(t),-B(x(t))+By\>-\frac{1}{\gamma\lambda(t)}\<\dot x(t),x(t)-y\> \forall t \geq 0.
\end{align*} 

The last inequality implies, by taking into consideration (iii), (iv) and (v), that
$$\lim_{t\rightarrow+ \infty}\phi_A\left(\left\|\frac{1}{\lambda(t)}\dot x(t)+x(t)-y\right\|\right)=0.$$ The properties of the function 
$\phi_A$ allow to conclude that $\frac{1}{\lambda(t)}\dot x(t)+x(t)-y$ converges strongly to $0$, as $t\rightarrow + \infty$, hence from  (iii) we obtain the conclusion. 

Finally, suppose that $B$ is uniformly monotone, with corresponding function $\phi_B:[0,+\infty) \rightarrow [0,+\infty]$, which is increasing  and vanishes only at $0$. The conclusion follows by taking  in the inequality
$$\<x(t)-y,B(x(t))-By\>\geq \phi_B(\|x(t)-y\|)$$ 
the limit as $t\rightarrow + \infty$ and by using (i) and (v). 
\end{proof}

\begin{remark} Let us mention that in case $A=\partial\Phi$, where $\Phi:{\cal H}\rightarrow\R\cup\{+\infty\}$ is a proper, convex 
and lower semicontinuous function defined on a real Hilbert space ${\cal H}$, and for $\lambda(t)=1$ for all $t\geq 0$, the dynamical system 
\eqref{dyn-syst-fb} becomes \eqref{syst-abb-att}, which has been studied in \cite{abbas-att-arx14}. Notice that the weak convergence 
of \eqref{syst-abb-att} is obtained in \cite[Theorem 4.2]{abbas-att-arx14} for a constant step-size $\gamma\in(0,4\beta)$.  
\end{remark}

\begin{remark} The explicit discretization of \eqref{dyn-syst-fb} with respect to the time variable $t$, with step size $h_n>0$ and initial point $x_0$, yields the following iterative scheme: 
$$\frac{x_{n+1}-x_n}{h_n}=\lambda_n\left[J_{\gamma A}\Big(x_n-\gamma Bx_n\Big)-x_n\right] \ \forall n \geq 0.$$

For $h_n=1$ this becomes
\begin{equation}\label{fb-discrete}x_{n+1}=x_n+\lambda_n\left[J_{\gamma A}\Big(x_n-\gamma Bx_n\Big)-x_n\right] \ \forall n \geq 0,\end{equation}
which is the classical forward-backward algorithm for finding the set of zeros of $A+B$  
(see \cite[Theorem 25.8]{bauschke-book}). Let us mention that the convergence of \eqref{fb-discrete} is guaranteed under the condition 
$\sum_{n \in \N} \lambda_n(\delta-\lambda_n)=+\infty$.  
\end{remark}

\begin{remark} As mentioned in the introduction, the Douglas-Rachford algorithm for finding the set of zeros 
of the sum of two maximally monotone operators follows from the discrete version of the Krasnosel'ski\u{\i}--Mann numerical scheme, 
see \cite{bauschke-book}. Following the approach presented above, one can formulate a dynamical system of Douglas-Rachford-type, the existence  and weak convergence of the trajectories being a consequence of the main results presented here.  The same can be done for other iterative schemes which have their origins in the discrete Krasnosel'ski\u{\i}--Mann algorithm, like are the generalized forward-backward splitting algorithm in \cite{rfp} and the forward-Douglas-Rachford splitting algorithm in \cite{bra}.
\end{remark}

\section{An alternative approach relying on time rescaling arguments}\label{sec4}

The content of this section has as starting point a comment made by H. Attouch on a preliminary version of this manuscript. We will show, by using time rescaling arguments, 
that the convergence behavior of the dynamical system \eqref{dyn-syst-KM} can be derived from the one of an autonomous dynamical system governed by a cocoercive operator. 
Let us recall first the following classical result, which can be deduced for example from \cite[Theorem 3.1]{abbas-att-arx14} by taking $\Phi=0$ as well as from Theorem \ref{fb-dyn} by choosing $Ax=0$ for 
all $x\in\cal{H}$ and $\lambda(t)=1$ for all $t\geq 0$. 

\begin{theorem}\label{dyn-coc} Let $B:\cal{H}\rightarrow\cal{H}$ be a cocoercive operator such that $\zer B\neq\emptyset$ and  
$w_0\in {\cal H}$. Let $w:[0,+\infty)\rightarrow{\cal H}$ be the unique strong global solution of 
the dynamical system \begin{equation}\label{dyn-syst-coc}\left\{
\begin{array}{ll}
\dot w(t)+B(w(t))=0\\
w(0)=w_0.
\end{array}\right.\end{equation} Then the following statements are true: 

(a) the trajectory $w$ is bounded and $\int_0^{+\infty}\|\dot w(t)\|^2dt<+\infty$;  

(b) $w(t)$ converges weakly to a point in $\zer B$, as $t\rightarrow+\infty$; 

(c) $B(w(t))$ converges strongly to $0$, as $t\rightarrow+\infty$.  
\end{theorem}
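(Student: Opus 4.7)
The plan is to obtain Theorem \ref{dyn-coc} as a direct specialization of Theorem \ref{fb-dyn}, following the hint in the paragraph preceding the statement: take $A\equiv 0$ (which is maximally monotone with $J_{\gamma A}=\id$ for every $\gamma>0$), pair it with the given $\beta$-cocoercive $B$, fix $\lambda(t)\equiv 1$, and absorb the step-size $\gamma$ by an elementary time rescaling. The point $y\in\zer B$ exists by hypothesis and satisfies $By=0$, and $\zer(A+B)=\zer B$.

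First I would verify that the hypotheses of Theorem \ref{fb-dyn} are satisfied for these choices. Pick any $\gamma\in(0,2\beta)$; then $\beta/\gamma>1/2$, so $\delta=\min\{1,\beta/\gamma\}+1/2>1$, and therefore $\lambda(t)=1\in[0,\delta]$. Moreover $\inf_{t\geq 0}\lambda(t)=1>0$, so both the strong convergence clauses (v)--(vi) of Theorem \ref{fb-dyn} are available. With these choices the system \eqref{dyn-syst-fb} reduces to
\begin{equation*}
\dot x(t)+\gamma B(x(t))=0,\qquad x(0)=w_0.
\end{equation*}

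Second, I would connect this to \eqref{dyn-syst-coc} by the rescaling $w(t):=x(t/\gamma)$. The chain rule gives $\dot w(t)=\gamma^{-1}\dot x(t/\gamma)=-B(w(t))$ for a.e.\ $t\geq 0$, and $w(0)=w_0$, so $w$ is the unique strong global solution of \eqref{dyn-syst-coc}. It remains to transport the conclusions back. Boundedness of $w$ follows from that of $x$ (Theorem \ref{fb-dyn}(i)). Using the substitution $s=t/\gamma$ one computes $\int_0^{+\infty}\|\dot w(t)\|^2\,dt=\gamma^{-1}\int_0^{+\infty}\|\dot x(s)\|^2\,ds<+\infty$, proving (a). Statement (b) follows from Theorem \ref{fb-dyn}(iv): $x(s)$ converges weakly to some $w_\infty\in\zer(A+B)=\zer B$, and weak convergence is preserved by the rescaling. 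Finally, Theorem \ref{fb-dyn}(v) applied with any $y\in\zer B$ yields $B(x(s))\to By=0$ strongly, and again the rescaling gives $B(w(t))\to 0$ strongly, which is (c).

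There is no serious obstacle; the only care needed is to check the averaging parameter condition $\lambda\leq\delta$ (trivially satisfied since $\delta>1$) and to track the factor $\gamma$ in the rescaling, in particular in the integrability statement of (a). The argument is essentially a one-line reduction.
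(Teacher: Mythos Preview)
Your proof is correct and follows exactly the route the paper indicates: deduce the result from Theorem~\ref{fb-dyn} with $A\equiv 0$ and $\lambda\equiv 1$. The paper states this reduction in one line without details; your only addition is the time rescaling $w(t)=x(t/\gamma)$ to absorb the step-size $\gamma$ that Theorem~\ref{fb-dyn} introduces, which is the natural way to close the gap the paper leaves implicit.
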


Let us consider again the dynamical system \eqref{dyn-syst-KM}, written as

\begin{equation*}\label{dyn-syst-KM2}\left\{
\begin{array}{ll}
\dot x(t)+\lambda(t)(\id-T)(x(t))=0\\
x(0)=x_0.
\end{array}\right.\end{equation*}

We recall that $T$ is nonexpansive such that $\fix T\neq\emptyset$ and $\lambda:[0,\infty)\rightarrow[0,1]$ is Lebesgue measurable. 
By using a time rescaling argument as in \cite[Lemma 4.1]{att-cza-10}, we can prove a connection between the dynamical system 
\eqref{dyn-syst-KM} and the system \begin{equation}\label{dyn-syst-coc2}\left\{\begin{array}{ll}
\dot w(t)+(\id -T)(w(t))=0\\
w(0)=x_0.
\end{array}\right.\end{equation}

In the following we suppose that \begin{equation}\label{cond-l}\int_0^{+\infty}\lambda(t)dt=+\infty.\end{equation}
Notice that the considerations which we make in the following remain valid also when one requires for the function $\lambda$ an arbitrary positive upper bound. However, we 
choose as upper bound $1$ in order to remain in the setting presented in the previous section. 

Suppose that we have a solution $w$ of \eqref{dyn-syst-coc2}. By defining the function $T_1:[0,+\infty)\rightarrow[0,+\infty)$, 
$T_1(t)=\int_0^t\lambda(s)ds$, one can easily see that $w\circ T_1$ is a solution of \eqref{dyn-syst-KM}.

Conversely, if $x$ is a solution of \eqref{dyn-syst-KM}, then $x\circ T_2$ is a solution of \eqref{dyn-syst-coc2}, where 
$T_2:[0,+\infty)\rightarrow[0,+\infty)$ is defined for every $t \geq 0$ implicitly as $\int_0^{T_2(t)}\lambda(s)ds=t$ (this is possible due to the 
properties of the the function $\lambda$). 

In the arguments above we used that \begin{equation}\label{t1}T_1'(t)=\lambda(t) \ \forall t\geq 0\end{equation}
and \begin{equation}\label{t2}T_2'(t)\lambda(T_2(t))=1 \ \forall t\geq 0.\end{equation}

Further, since $B:=\id-T$ is $1/2$-cocoercive (this follows from the nonexpansiveness of $T$), 
for the dynamical system \eqref{dyn-syst-coc2} one can apply the convergence results presented in Theorem \ref{dyn-coc}. 
We would also like to notice that the existence of a strong global solution of \eqref{dyn-syst-KM} follows from the corresponding
result for \eqref{dyn-syst-coc2}, while for the uniqueness property we have to make use of the considerations at the end of Section \ref{sec2}.

In the following we deduce the convergence statements of Theorem \ref{conv-KM} from the ones of Theorem \ref{dyn-coc} by using the time rescaling arguments presented above. 

Let $x$ be the unique strong global solution of \eqref{dyn-syst-KM}. Due to the uniqueness of the solutions of \eqref{dyn-syst-KM} and 
\eqref{dyn-syst-coc2}, we have $x=w\circ T_1$, where $w$ is the unique strong global solution of \eqref{dyn-syst-coc2}.

(i) From Theorem \ref{dyn-coc}(a) we know that $w$ is bounded, hence $x$ is bounded, too. We have 
\begin{align*}
\int_0^{+\infty}\|\dot x(s)\|^2ds & =\lim_{t\rightarrow+\infty}\int_0^t \|w'(T_1(s))\|^2(\lambda(s))^2ds\leq 
\lim_{t\rightarrow+\infty}\int_0^t \|w'(T_1(s))\|^2\lambda(s)ds\\
& =\lim_{t\rightarrow+\infty}\int_0^{T_1(t)} \|w'(u)\|^2du<+\infty,
\end{align*}
where we used Theorem \ref{dyn-coc}(a) and the change of variables $T_1(s)=u$. 

(ii) This statement follows from Theorem \ref{dyn-coc}(c).

(iii) Is a direct consequence of the boundedness of $\lambda$, (ii) and of the way the dynamic is defined. 

(iv) From Theorem \ref{dyn-coc}(b) it follows that $x(t)=w(T_1(t))$ converges weakly to a point in $\zer B=\fix T$ as $t \rightarrow +\infty$. 

\begin{remark} 
In the light of the above considerations it follows that the conclusion of Theorem \ref{conv-KM} remains valid also when assuming that $\int_0^{+\infty}\lambda(t)dt=+\infty$, which is a weaker condition than asking that $\int_0^{+\infty}\lambda(t)(1-\lambda(t))dt=+\infty$ or $\inf_{t \geq 0} \lambda(t) >0$. A similar statement applies to Theorem \ref{fb-dyn}, too. Notice also that the assumption that $\lambda$ takes values in $[0,1]$, being strictly bounded away from the endpoints of this interval, was essential, in combination to the considerations made in the proof of Theorem \ref{conv-KM}, for deriving convergence rates for the trajectories of \eqref{dyn-syst-KM}. Finally, let us mention that, as pointed out in Remark \ref{contdisc},  the assumption $\int_0^{+\infty}\lambda(t)(1-\lambda(t))dt=+\infty$ has a natural counterpart in the discrete case which guarantees convergence for the sequence of generated iterates, while this is not the case for the other two
conditions on $\lambda$ considered in this paper.
\end{remark}

\noindent{\bf Acknowledgements.} We are thankful to H. Attouch for bringing into our attention the time rescaling arguments 
which led to the results presented in the last section.\\

\end{document}